\newcommand{\GG}{\mathbb{G}}
\newcommand{\KK}{\mathbb{K}}
\newcommand{\QQ}{\mathbb{Q}}
\newcommand{\ZZ}{\mathbb{Z}}
\newcommand{\NN}{\mathbb{N}}
\newcommand{\VV}{\mathbb{V}}
\newcommand{\Aut}{\mathrm{Aut}}
\newcommand{\AAut}{\mathrm{AAut}}
\newcommand{\SAut}{\mathrm{SAut}}
\begin{document}

\author{Viktoriia Borovik}
\address{Lomonosov Moscow State University, Faculty of Mechanics and Mathematics, Department of Higher Algebra, Leninskie Gory 1, Moscow, 119991 Russia; \linebreak and \linebreak
HSE University, Faculty of Computer Science, Pokrovsky Boulevard 11, Moscow, 109028, Russia}
\email{vborovik@hse.ru}

\author{Sergey Gaifullin}
\address{Moscow Center for Fundamental and Applied Mathematics, Moscow, Russia; \linebreak and \linebreak
National Research University Higher School of Economics, Faculty of Computer Science, Pokrovsky Boulevard 11, Moscow, 109028, Russia}
\email{sgayf@yandex.ru}

\author{Anton Shafarevich}
\address{Moscow Center for Fundamental and Applied Mathematics, Moscow, Russia; \linebreak and \linebreak
National Research University Higher School of Economics, Faculty of Computer Science, Pokrovsky Boulevard 11, Moscow, 109028, Russia}
\email{shafarevich.a@gmail.com}

\subjclass[2020]{Primary 14R20,  14J50; Secondary 13A50, 14M25.}

\keywords{Horosherical variety, toric variety, automorphism, locally nilpotent derivation.}

\thanks{The second author was supported by RSF grant 20-71-00109.}

\title{On orbits of automorphism groups on horospherical varieties}
\maketitle 

\newtheorem{theorem}{Theorem}
\newtheorem{conj}{Conjecture}
\newtheorem{lemma}{Lemma}
\newtheorem{cor}{Corollary}
\newtheorem{proposition}{Proposition}
\theoremstyle{remark}
\newtheorem{remark}{Remark}
\theoremstyle{definition}
\newtheorem{definition}{Definition}
\newtheorem{ex}{Example}

\sloppy

\begin{abstract}
In this paper we describe orbits of automorphism group on a horospherical variety in terms of degrees of homogeneous with respect to natural grading locally nilpotent derivations. In case of (may be nonnormal) toric varieties a description of  orbits of automorphism group in terms of corresponding weight monoid is obtained.
\end{abstract}

\section{Introduction}

Let $\KK$ be an algebraically closed field of characteristic zero. If we are given by an affine algebraic variety $X$ we can consider the group of its regular automorphisms $\Aut(X)$.  This group naturally acts on $X$. We study orbits of this action.

If the variety $X$ admits an action of an algebraic group $G$, then $\Aut(X)$-orbits are unions of $G$-orbits. So, to describe $\Aut(X)$-orbits we are to obtain a criterium for two $G$-orbits to lie in the same $\Aut(X)$-orbit. This aproach is very usefull when there are only finite number of $G$-orbits. Often it is more convenient to describe orbits of the neutral component $\Aut(X)^0\subset\Aut(X)$. Arzhantsev and Bazhov~\cite{AB} described  $\Aut(X)^0$-orbits for normal toric varieties $X$, see also \cite{Sh2}. In this paper we obtain a generalization of this result. We investigate $\Aut(X)^0$-orbits on complexity-zero horospherical varieties $X$. Recall that {\it horospherical} variety is an irreducible variety admitting an action of an affine algebraic group such that the stabilizer of a generic point contains a maximal unipotent subgroup in $G$. It is called {\it complexity-zero} if $G$-action on $X$ has an open orbit.

The automorphism group of an affine variety usually is not an algebraic group. But we can consider the subgroup $\AAut(X)\subset \Aut(X)$ generated by all algebraic subgroups $H\subset \Aut(X)$. Every algebraic group is generated by subgroups isomorphic to additive and multiplicative group of $\KK$. We call such subgroups $\GG_a$ and $\GG_m$-subgroups respectively. So,  $\AAut(X)$ is the subgroup of $\Aut(X)$, generated by all $\GG_a$ and $\GG_m$-subgroups. Remark that the subgroup $\SAut(X)$ generated by all $\GG_a$-subgroups is called the subgroup of {\it special} automorphism. In \cite{AFKKZ} varieties $X$ with transitive action of $\SAut(X)$ on the smooth locus $X^{reg}$ were investigated.   Such varieties are called {\it flexible}. Arzhantsev, Kujumzhijan and Zaidenberg~\cite{AKZ} proved flexibility of normal toric varieties. Boldyrev and Gaifullin~\cite{BG} give a criterium for a (not nessesary normal) toric variety to be flexible. Shafarevich~\cite{Sh} proved flexibility of horospherical complexity-zero varieties corresponding to a semisimple group $G$. Gaifullin and Shafarevich~\cite{GSh} proved flexibility of normal horospherical complexity-zero varieties corresponding to an arbitrary group. So, if we have a normal horospherical complexity-zero variety, all regular points form one $\Aut(X)$-orbit. So in case of normal horospherical variety the goal is to describe singular $\Aut(X)$-orbits.

It is easy to see that $\AAut(X)$ is a subgroup of $\Aut(X)^0$. These groups can be different but we prove that their orbits for horospherical complexity-zero varieties coinside. Then we investigate $\AAut(X)$-orbits. To do this we need two opposite teqniques. We need to glue $G$-orbits if they lies in the same $\Aut(X)^0$-orbit and to separate $G$-orbits which are contained in different $\Aut(X)^0$-orbits. Some of $\GG_a$-orbits we can glue by $T$-normalized $\GG_a$-actions with respect to the right action of the maximal torus $T\subset G$. We prove that if two $G$-orbits can not be glued by any chain of $T$-normalized $\GG_a$-actions, then they lies in different  $\Aut(X)^0$-orbits.

We obtain a criterium for two $G$-orbits to be contained in one $\Aut(X)^0$-orbit in terms of degrees of $\mathfrak{X}(T)$-homogeneous locally nilpotent derivations, see Theorem~\ref{mmain}. But in arbitrary case the problem of describing these degrees is open. There is completely solved in case of (may be nonnormal) toric varieties. So, in this case we obtain a description of $\Aut(X)^0$-orbits in terms of the weight monoid corresponding to the variety, see Corollary~\ref{p}. 

\section{Preliminaries}
\subsection{Neutral component of automorphism group}

Let us define the connected component of $\Aut(X)$ following \cite{R}, see also \cite{AB}.

\begin{definition} A family $\{\varphi_b, b\in B\}$ of automorphisms of a variety $X$, where the parametrizing set $B$ is an algebraic variety, is an {\it algebraic family} if the map $B\times X\rightarrow X$ given by $(b,x)\mapsto \varphi_b(x)$ is a morphism.
\end{definition}

\begin{definition} The connected component $\Aut(X)^0$ of the group $\Aut(X)$ is the subgroup of
automorphisms that may be included in an algebraic family $\{\varphi_b, b\in B\}$ with an irreducible
variety as a base $B$ such that $\varphi_{b_0}=\mathrm{id}_X$ for some $b_0\in B$.
\end{definition}

It is easy to check that $\Aut(X)^0$ is indeed a subgroup, see~\cite{R}.

If $G$ is an algebraic group and $G\times X\rightarrow X$ is a regular action, then we may take $B = G$ and consider the algebraic family $\{\varphi_g, g\in G\}$, where $\varphi_g(x)=gx$. So any automorphism defined by an element of G is included in $\Aut(X)^0$. In partiqular every $\GG_a$ and every $\GG_m$-subgroup is contained in $\Aut(X)^0$. Therefore, $\AAut(X)\subseteq\Aut(X)^0$.

\subsection{Cones}\label{kd}

Let $M\cong \ZZ^n$ be a lattice and $P\subset M$ be a finite generated submonoid.  Let us consider the following  vector space over rational numbers  $M_{\QQ} = M \otimes_{\ZZ} \QQ$. The cone in $M_{\QQ} $ spanned by $P$ we denote by $\sigma^\vee=\sigma^\vee(P)$. It is a finitely generated polyhedral cone. The monoid $P$ is called {\it saturated}, if $P=\mathbb{Z}P\cap \sigma^{\vee}$. 

Let $N=\mathrm{Hom}(M,\ZZ)$ be the dual lattice. Denote by $\langle \cdot, \cdot \rangle : M \times N \rightarrow \ZZ$ the natural pairing between these lattices. It extends to the pairing $\langle \cdot, \cdot \rangle_{\QQ} : M_{\QQ} \times N_{\QQ} \rightarrow \QQ$ between the vector spaces $M_{\QQ}$ and $N_{\QQ} = N \otimes_{\ZZ} \QQ$. Let us denote by $\sigma$ the cone dual to $\sigma^\vee$
$$
\sigma=\{u\in N_\QQ\mid \langle v,u\rangle\geq 0\text{ for all } v\in \sigma^\vee\}.
$$
There is a natural bijection between $k$-dimensional faces of $\sigma$ and $(n-k)$-dimensional faces of $\sigma^\vee$. A face $\tau\preceq\sigma$ corresponds to the face $\widehat{\tau}=\sigma^\vee\cap\langle \tau\rangle^\bot$.

A cone is called {\it pointed} if $\sigma\cap(-\sigma)=\{0\}$. The cone $\sigma$ is pointed if and only if the cone $\sigma^\vee$ is of full dimension, i.e. the linear shell $\langle \sigma^\vee\rangle=M_\QQ$. If $\sigma^\vee$ is not of full dimension, we replace $M$ by the group generated by $P$. So, further we assume that $\sigma$ is pointed.

Let us denote the set of rays of the cone $\sigma$ by  $\sigma(1)$ and let $p_{\rho}$ be the primitive lattice vector on a ray $\rho$. 
\begin{definition}
Let $$\mathfrak{R}_{\rho}:=\{e \in M \ \ | \ \ \langle e,p_{\rho} \rangle = -1, \langle e,p_{\rho '} \rangle \geq 0 \ \ \forall \rho ' \neq \rho \in \sigma(1)\}.$$
Then the elements of the set $\mathfrak{R} := \bigsqcup\limits_{\rho}\mathfrak{R}_{\rho}$ are called the {\it Demazure roots} of the cone~$\sigma$.
\end{definition}
We will call ray $\rho$  the {\it distinguished ray} of the Demazure root $e$ if $e \in \mathfrak{R}_{\rho}$.

\begin{definition}
Let $\tau$ be a face of $\sigma^\vee$. Let $\rho_1,\ldots, \rho_k$ be all rays normal to $\tau$ and $p_1,\ldots, p_k$ be their primitive vectors. Suppose $e$ is a Demazure root such that $\langle e, p_{1}\rangle=-1$ and  $\langle e, p_{i}\rangle=0$ for all $2\leq i\leq k$. Then we say that $e$ is a $\tau$-{\it root}. 
\end{definition}

Let us give some definition according to \cite{TY}.

\begin{definition}
An element $p$ of the monoid $P$ is called {\it saturation point} of $P$, if the moved cone $p+\sigma^{\vee}$ has no holes, i.e. $(p+\sigma^{\vee})\cap M\subset P$. 
 
A face $\tau$ of the cone $\sigma^{\vee}$ is called {\it almost  saturated}, if there is a saturation point of  $P$ in $\tau$. 
Otherwise $\tau$ is called a {\it nowhere saturated} face.
\end{definition}

The following lemma is known, see for example~\cite[Lemma~2]{BG}.
\begin{lemma}\label{mg}
The maximal face, i.e. the whole cone $\sigma^\vee$, is almost saturated.
\end{lemma}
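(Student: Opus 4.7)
The plan is to exhibit $p_0\in P$ with $p_0+\tilde P\subseteq P$, where $\tilde P:=\sigma^\vee\cap M$ is the saturation of $P$ (identified in this way because $M=\ZZ P$ by our setup in Subsection~\ref{kd}).

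First I would reduce to a finite problem. By Gordan's lemma, $\tilde P$ is a finitely generated monoid, say with generators $q_1,\ldots,q_t$. Each $q_i$ lies in $\sigma^\vee=\mathrm{cone}(P)$, so clearing denominators in a $\QQ_{\geq 0}$-expression of $q_i$ in terms of generators of $P$ produces integers $k_i\geq 1$ with $k_iq_i\in P$. Writing an arbitrary $y=\sum a_iq_i\in\tilde P$ via Euclidean division $a_i=k_ib_i+r_i$, $0\leq r_i<k_i$, gives
$$y=\sum_ib_i(k_iq_i)+\sum_ir_iq_i\;\in\;P+F,\qquad F:=\Bigl\{\sum_ir_iq_i\;:\;0\leq r_i<k_i\Bigr\},$$
a finite subset of $\tilde P$. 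Hence $\tilde P=P+F$, and it suffices to find $p_0\in P$ with $p_0+f\in P$ for every $f\in F$.

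To construct such $p_0$ I would use the identity $M=\ZZ P$: each $f\in F$ is a $\ZZ$-linear combination of elements of $P$, and splitting positive and negative parts yields $f=p_f^+-p_f^-$ with $p_f^\pm\in P$; equivalently $f+p_f^-=p_f^+\in P$. Setting $p_0:=\sum_{f\in F}p_f^-\in P$, for any $f_0\in F$ one computes
$$p_0+f_0=(p_{f_0}^-+f_0)+\sum_{f\neq f_0}p_f^-=p_{f_0}^++\sum_{f\neq f_0}p_f^-\in P,$$
so that $p_0+\tilde P\subseteq p_0+F+P\subseteq P+P=P$, as required.

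The main obstacle is the first step, the finite reduction $\tilde P=P+F$, which rests on Gordan's lemma applied to $\tilde P$ together with the integrality multiples $k_iq_i\in P$. Once this is in place, the construction of $p_0$ is an elementary combinatorial observation: the trick is to absorb all the finitely many obstructions $f\in F$ simultaneously by summing the ``negative parts'' $p_f^-$, which is possible precisely because $M=\ZZ P$ lets us write each $f$ as a difference of two elements of $P$.
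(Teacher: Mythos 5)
Your proof is correct. Note that the paper itself gives no argument for this lemma, only a citation to \cite[Lemma~2]{BG}; your two-step argument (first $\sigma^\vee\cap M=P+F$ with $F$ finite via Gordan's lemma and clearing denominators, then absorbing all of $F$ at once by summing the negative parts $p_f^-$) is essentially the standard proof of that cited statement, so there is nothing to object to. One point worth making explicit: the lemma as literally stated, with the saturation condition $(p+\sigma^\vee)\cap M\subset P$, is false unless $\ZZ P=M$ (e.g.\ $P=2\ZZ_{\geq 0}\subset M=\ZZ$ has no saturation point), and the reduction in Subsection~\ref{kd} only guarantees that $\ZZ P$ has finite index in $M$; you correctly read the intended hypothesis $M=\ZZ P$, which does hold in the paper's applications (effective torus actions), and your construction of $p_f^{\pm}$ is exactly where that hypothesis enters.
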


\subsection{Horospherical varieties}\label{HV}

We recall some results on horospherical varieties, all proofs can be found in \cite{PV}, see also \cite{T}.

Let $G$ be a connected linear algebraic group. 

\begin{definition}
An irreducible $G$-variety $X$ is called \emph{horospherical}, if for a generic point $x\in X$ the stabilizer of $x$ contains a maximal unipotent subgroup $U\subseteq G$.

If $X$ contains an open $G$-orbit, then $X$ is called \emph{complexity-zero horospherical}. In \cite{PV} affine complexity-zero horospherical varieties are called $S$-varieties.
\end{definition}

Suppose that $X$ is an affine complexity-zero horospherical variety. It is easy to see that the unipotent radical of $G$ acts trivially on $X$. Hence we may assume that $G$ is reductive. Taking a finite covering, we may assume that $G=T\times G'$, where $T$ is an algebraic torus and $G'$ is a semisimple group.

Let $O$ be the open orbit in $X$. We have the following sequence of inclusions
$$
\KK[X]\hookrightarrow\KK[O]\hookrightarrow\KK[G].
$$

Let $B$ be a Borel subgroup of $G$ and let $M=\mathfrak{X}(B)$ be the group of characters of $B$.   
For a $\Lambda\in M$ we put
$$
S_\Lambda=\left\{ f\in\KK[G]\mid f(gb)=\Lambda(b)f(g)\ \text{for all} \ g\in G, b\in B\right\}.
$$
Then
$$
S_{\Lambda} S_{\Lambda'}=S_{\Lambda+\Lambda'}.
$$

The set $\mathfrak{X}^+(B)$ of dominant weights consists of all $\Lambda$ such that $S_\Lambda\neq\{0\}$. 
It is proved in \cite{PV} that for an affine complexity-zero horospherical $G$-variety $X$ there is a decomposition
$$
\KK[X]=\bigoplus_{\Lambda\in P} S_\Lambda
$$
for some submonoid $P\in\mathfrak{X}^+(B)$. 

Using notations from the previous section, we denote by $\sigma^\vee$ the cone in $M_\QQ$ spanned by $P$. The variety $X$ is normal if and only if $P$ is saturated. There is a one-to-one correspondence between faces of $\sigma$ and $G$-orbits of $X$. More precisely, if
$O_\tau\subseteq  X$ is the $G$-orbit in $X$ corresponding to a face $\tau$ of the cone $\sigma$, then the ideal of functions vanishing on $O_\tau$ has the form 
$$
I(O_\tau)=\bigoplus_{\Lambda\in P\setminus\tau}S_\Lambda.
$$ 
This ideal vanishes on the closure $\overline{O_\tau}$. Then 
$$O_\tau=\overline{O_\tau}\setminus  \left(\bigcup_{\gamma \prec \tau}\overline{O_\gamma} \right).$$
If $\widehat{\xi} \preceq \sigma^\vee$ is the face corresponding to $\xi \preceq \sigma$, we use both denotations: $O_\xi=O_{\widehat{\xi}}$.

\begin{remark}
The cone $\sigma^\vee$ can be not of full dimension. But if we replace $M$ by the group generated by $P$, the correspondence between faces and orbits remains.
\end{remark}

To obtain a variety $X$ explicitly one should considet generators $\Lambda_1,\ldots, \Lambda_m$ of $P$ and consider the sum of irreducible $G$-representation which are contragradient to ones with highest weights  $\Lambda_1,\ldots, \Lambda_m$. In each $V(\Lambda_i)^*$ one need to find the eigenvector $v_i$. Put $v=v_1+\ldots+v_m$. Then $X\cong \overline{Gv}$. If $m=1$, then the variety $X$ is the closure of the eigenvector of an irreducible representation. Such varieties are called $HV$-varieties.  

An important partiqular case of horospherical varieties give toric varieties. More information on toric varieties one can find in \cite{CLSch} and \cite{Ful}.

\begin{definition}
A {\it toric variety} is a variety $X$ admitting an action of an algebraic torus $T \simeq (\KK^{\times})^n$ with open orbit.
\end{definition}
\begin{remark} Often by toric variety one mean a normal toric variety. We do not a-priori assume a toric variety to be normal.
\end{remark}

So, toric variety is a horospherical complexity-zero variety corresponding to $G\cong (\KK^{\times})^n$.
For a toric variety each nonzero homogeneous component has dimension one. We have
$$A = \bigoplus_{m \in P}\KK \chi^{m},$$
where  $\chi^m = t_1^{m_1}\cdot ... \cdot t_n^{m_n}$ is the character of the torus $T$ corresponding to a point $m = (m_1, ..., m_n)$.

\subsection{Derivations}

We recall basic facts from theory of locally nilpotent derivations, see for example \cite{Fr}. 

Let $A$ be a commutative associative algebra over $\KK$.
\begin{definition}
A linear mapping $\partial : A \rightarrow A$ is called {\it a derivation} if it satisfies the Leibniz rule: $\partial(ab) = a\partial(b) + b\partial(a)$.

A derivation is called {\it locally nilpotent or LND} if for any $a \in A$ there is $n \in \NN$ such that $\partial^n(a) = 0$.

A derivation is called {\it semisimple} if there exists a basis of $A$ consisting of $\partial$-semi-invariants (i.e. $ \partial(a) = \lambda a$ for $a \in A$).
\end{definition}

\begin{definition}
A derivation $\partial : A \rightarrow A$ is called {\it locally bounded} if any element $a \in A$ is contained in a $\partial$-invariant finite-dimensional linear subspace $V \subset A$.
\end{definition}
\begin{remark}
One can see that all semisimple and locally nilpotent derivations are locally bounded.
\end{remark}

Exponential mapping gives a correspondence between LND and $\GG_a$-subgroups in $\Aut(A)$, and between semisimple derivations and $\GG_m$-subgroups in $\Aut(A)$. A derivation $\delta$ correspondes to the subgroup $\{\exp(t\delta)\}$, where $t\in \KK$ for $\GG_a$-subgroup and $t\in \KK^\times$ for $\GG_m$-subgroup.

Let $F$ be an abelian group. Consider $F$-grading:

$$A = \bigoplus_{f \in F}A_f, \ \ \ A_fA_g \subseteq A_{f+g}.$$

\begin{definition}
A derivation $\partial : A \rightarrow A$ is called {\it $F$-homogeneous of degree $f_0 \in F$} if for all $a \in A_f$ we have $\partial(a) \in A_{f+f_0}.$
\end{definition}

Now let $A$ be a finitely generated $\ZZ$-graded algebra. The following lemmas are known.
\begin{lemma} \label{l1}
Let $\partial$ be a derivation of A. Then $\partial = \sum\limits_{i=l}^k \partial_i$, where $\partial_i$ is the homogeneous derivation of degree $i$.
\end{lemma}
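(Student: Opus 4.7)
The plan is to define the homogeneous components $\partial_i$ pointwise on graded pieces, verify that each of them is a derivation of the claimed degree, and then use finite generation of $A$ to cut off all but finitely many indices.

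First I would fix homogeneous generators $a_1,\ldots,a_n$ of $A$; these exist because the homogeneous components of any finite generating set again form a finite generating set. For an arbitrary homogeneous element $a\in A_f$, write $\partial(a)=\sum_{j}b_j$ with $b_j\in A_{f+j}$; by the definition of a $\ZZ$-graded algebra only finitely many $b_j$ are nonzero. Set $\partial_i(a):=b_i\in A_{f+i}$ and extend $\KK$-linearly to a map $\partial_i:A\to A$ that shifts degree by $i$. By construction the identity $\partial(a)=\sum_{i}\partial_i(a)$ holds pointwise, with only finitely many nonzero summands on any fixed element.

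Next I would check the Leibniz rule for $\partial_i$. For homogeneous $a\in A_f$ and $b\in A_g$ the product $ab$ lies in $A_{f+g}$, and both sides of $\partial(ab)=a\partial(b)+b\partial(a)$ decompose into homogeneous components. Projecting onto the summand $A_{f+g+i}$ gives $\partial_i(ab)=a\partial_i(b)+b\partial_i(a)$, and extending bilinearly shows that $\partial_i$ is a derivation of $A$ that is $\ZZ$-homogeneous of degree $i$.

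The final step, which is the only place where finite generation is really used, is to bound the range of nonzero indices. Each $\partial(a_s)$ has a finite homogeneous decomposition, so the set $I:=\{i\in\ZZ\mid \partial_i(a_s)\neq 0\text{ for some }s\}$ is finite. A derivation that vanishes on every generator of $A$ vanishes on all of $A$, so $\partial_i=0$ for $i\notin I$. Setting $l=\min I$ and $k=\max I$ yields $\partial=\sum_{i=l}^{k}\partial_i$, which is the desired decomposition. The expected obstacle, were $A$ not finitely generated or the grading coarser, would be exactly this finiteness: in general one would only obtain a formal, possibly infinite, sum of homogeneous derivations.
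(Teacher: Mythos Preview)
Your argument is correct and is the standard way to prove this decomposition. The paper itself does not supply a proof of this lemma: it simply states that ``the following lemmas are known'' and moves on, so there is no paper proof to compare against line by line. Your definition of $\partial_i$ via the graded projection, the verification of the Leibniz rule by projecting the identity $\partial(ab)=a\partial(b)+b\partial(a)$ onto $A_{f+g+i}$, and the finiteness step using that a derivation vanishing on a set of algebra generators vanishes identically are exactly the ingredients one expects; your remark that finite generation is only needed for the bound on the index set is also accurate.
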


\begin{lemma} \label{l2}
Let $\partial = \sum\limits_{i=l}^k \partial_i$ be a derivation of A. Then:
\begin{enumerate}
\item If $\partial$ is LND then $\partial_l$ and $\partial_k$ are LNDs.
\item If $\partial$ is locally bounded then if $l \neq 0$, $\partial_l$ is LND, and if  $k \neq 0$, $\partial_k$ is LND.
\end{enumerate}
\end{lemma}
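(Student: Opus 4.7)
The plan is to track the extremal $\ZZ$-homogeneous components of the iterates $\partial^n(a)$ and to show that they are captured by the pure iterates $\partial_l^n$ and $\partial_k^n$. Once this observation is in place, both parts reduce to very quick vanishing arguments.

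First I would reduce to the case where $a\in A_j$ is homogeneous: writing an arbitrary $a$ as a finite sum of homogeneous components and taking the maximum over $j$ of the vanishing indices reduces local nilpotence of $\partial_l$ (or $\partial_k$) on $A$ to local nilpotence on each graded piece. With $a\in A_j$ fixed, I expand
$$\partial^n(a) \;=\; \sum_{(i_1,\ldots,i_n)\in\{l,\ldots,k\}^n} \partial_{i_1}\partial_{i_2}\cdots\partial_{i_n}(a),$$
noting that the summand indexed by $(i_1,\ldots,i_n)$ is $\ZZ$-homogeneous of degree $j+i_1+\cdots+i_n$. Because $l\le i_s\le k$, the value $nl$ of this sum is attained only by $(l,\ldots,l)$ and the value $nk$ only by $(k,\ldots,k)$; hence the homogeneous component of $\partial^n(a)$ in degree $j+nl$ equals $\partial_l^n(a)$, and the one in degree $j+nk$ equals $\partial_k^n(a)$.

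Part (1) is then immediate: if $\partial$ is LND, then $\partial^n(a)=0$ for $n$ large, which forces every homogeneous component of $\partial^n(a)$ to vanish; in particular $\partial_l^n(a)=\partial_k^n(a)=0$, so both $\partial_l$ and $\partial_k$ are LND.

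For part (2), choose a $\partial$-invariant finite-dimensional subspace $V\ni a$. Any finite basis of $V$ involves only finitely many nonzero homogeneous components in total, so there exists a finite set $D\subset\ZZ$ with $V\subseteq\bigoplus_{d\in D} A_d$. Since every iterate $\partial^n(a)$ lies in $V$, all of its homogeneous components lie in degrees from $D$. If $l\neq 0$, then $j+nl\to\pm\infty$ and hence $j+nl\notin D$ for $n$ sufficiently large, forcing the extremal component $\partial_l^n(a)$ to vanish; hence $\partial_l$ is LND. The same argument with $k$ in place of $l$ handles $\partial_k$ whenever $k\neq 0$. I do not expect any serious obstacle; the only step requiring a moment's care is the observation that a $\partial$-invariant finite-dimensional subspace sits inside a bounded band of $\ZZ$-degrees, which follows at once from finiteness of a basis together with the finite homogeneous support of each basis element.
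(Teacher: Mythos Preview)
The paper does not actually prove this lemma: it simply records Lemmas~\ref{l1} and~\ref{l2} as ``known'' (with an implicit pointer to the general reference~\cite{Fr}) and gives no argument of its own. Your proposal is correct and is precisely the standard proof one finds in the literature: isolate the extremal homogeneous component of $\partial^n(a)$, identify it with $\partial_l^n(a)$ (resp.\ $\partial_k^n(a)$), and then use either the vanishing of $\partial^n(a)$ in the LND case, or the boundedness of the homogeneous support of a $\partial$-invariant finite-dimensional subspace in the locally bounded case. There is nothing to add or to compare.
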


\begin{cor}
If $A$ admits an LND, then $A$ admits a $\ZZ$-homogeneous LND.
\end{cor}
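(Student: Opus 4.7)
The plan is to observe that this corollary is essentially immediate from the two preceding lemmas; no new ideas are required beyond assembling them. We just need to take any LND of $A$, decompose it into homogeneous components via the grading, and then extract a homogeneous component which is itself an LND.

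More concretely, first I would pick a nonzero LND $\partial$ of $A$, which exists by hypothesis (the zero derivation is trivially homogeneous and locally nilpotent, but we really want a nontrivial statement). Applying Lemma~\ref{l1} to $\partial$ gives a decomposition $\partial = \sum_{i=l}^{k} \partial_i$ into $\ZZ$-homogeneous derivations $\partial_i$ of degree $i$, where we may take $l$ and $k$ so that $\partial_l$ and $\partial_k$ are the nonzero extreme components. Then I would invoke Lemma~\ref{l2}(1), which tells us that since $\partial$ is LND, the extreme homogeneous components $\partial_l$ and $\partial_k$ are both LNDs. Either of these is a nonzero $\ZZ$-homogeneous LND of $A$, which is exactly what the corollary asserts.

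There is no real obstacle: the corollary is a direct packaging of Lemma~\ref{l1} (existence of the homogeneous decomposition of any derivation on a finitely generated $\ZZ$-graded algebra) together with Lemma~\ref{l2}(1) (preservation of local nilpotence for the extreme homogeneous components). The only point worth mentioning in the write-up is that one must choose $l$ and $k$ so that the corresponding components are nonzero, so as to produce a genuinely nontrivial homogeneous LND rather than the zero derivation. The proof itself should take only a couple of lines.
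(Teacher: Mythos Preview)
Your proposal is correct and matches the paper's intent: the corollary is stated without proof immediately after Lemmas~\ref{l1} and~\ref{l2}, precisely because it follows by decomposing a nonzero LND via Lemma~\ref{l1} and applying Lemma~\ref{l2}(1) to an extreme nonzero component. Your remark about choosing $l$ and $k$ so that $\partial_l$ and $\partial_k$ are nonzero is the only thing worth making explicit.
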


Now let $A$ be a $\ZZ^n$-graded algebra. If $\delta$ is a derivation it can be decomposed onto a sum of $\ZZ^n$-homogeneous derivations. Let $R$ be the set of $\ZZ^n$-degrees of nonzero summands.  
If $\partial$ is locally bounded, then summands corresponding to vertices of the convex hull different from the origin are LNDs. This implies the following lemma.

\begin{lemma}\label{lleemm}
Let $\partial$ be a $\ZZ$-homogeneous of degree $d\neq 0$ locally bounded derivation. Suppose $\ZZ$ is included as a subgroup to $\ZZ^n$. Then among $\ZZ^n$-homogeneous summands of $\partial$ there is an LND $\delta\neq 0$. And $\ZZ$-degree of $\delta$ equals to $d$.
\end{lemma}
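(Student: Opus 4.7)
The plan is to decompose $\partial$ into its $\ZZ^n$-homogeneous summands and then apply the observation recorded just above the lemma statement: summands indexed by vertices of the support polytope that are distinct from the origin are LNDs. I would write $\partial = \sum_{v \in R} \partial_v$ with $R \subset \ZZ^n$ the set of $\ZZ^n$-degrees of the nonzero summands, as in the paragraph preceding the lemma. Finiteness of $R$ follows from local boundedness of $\partial$ together with finite generation of $A$: applying $\partial$ to a finite $\ZZ^n$-homogeneous generating set of $A$ produces only finitely many nonzero $\ZZ^n$-graded components, and these determine the full decomposition of $\partial$.

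The next step is to translate $\ZZ$-homogeneity into a constraint on $R$. The compatibility of the $\ZZ$-grading with the $\ZZ^n$-grading encoded by the given inclusion $\ZZ \hookrightarrow \ZZ^n$ provides a group homomorphism $\pi \colon \ZZ^n \to \ZZ$ with the property that every $\ZZ^n$-homogeneous element of $\ZZ^n$-degree $v$ is $\ZZ$-homogeneous of degree $\pi(v)$. Since $\partial$ itself is $\ZZ$-homogeneous of degree $d$, each $v \in R$ must satisfy $\pi(v) = d$. Hence $R$ is contained in the affine hyperplane $H = \{v \in \QQ^n : \pi(v) = d\}$, and because $d \neq 0$ this hyperplane misses the origin.

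With this in place the rest is immediate. The set $R$ is nonempty (as $\partial \neq 0$) and finite, so $\mathrm{conv}(R) \subset H$ is a bounded polytope that has at least one vertex $v_0 \in R$, and $v_0 \neq 0$ since $0 \notin H$. By the observation cited above, $\delta := \partial_{v_0}$ is then a nonzero $\ZZ^n$-homogeneous LND, and its $\ZZ$-degree is $\pi(v_0) = d$, as required. I expect the only point requiring any care to be the clean setup of $\pi$ from the inclusion $\ZZ \hookrightarrow \ZZ^n$ and the verification that $R$ is finite; once that bookkeeping is done, the conclusion is a direct appeal to the assertion stated just before the lemma.
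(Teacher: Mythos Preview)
Your proposal is correct and follows essentially the same route as the paper, which does not give a separate proof but simply says the lemma follows from the observation that summands at vertices of the convex hull of $R$ different from the origin are LNDs. You have merely spelled out the bookkeeping (finiteness of $R$, the projection $\pi$, and the fact that $R$ lies in the affine hyperplane $\pi=d$ missing the origin) that the paper leaves implicit.
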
 

Now let $X$ be an affine toric variety. Fix a Demazure root $e \in \mathfrak{R}_{\rho}$.
One can define the following $M$-homogeneous LND $\partial_e$ on the algebra $A=\KK[X]$ by the rule
$$\partial_e(\chi^m) = \langle p_{\rho}, m \rangle \chi^{e+m}.$$
Any homogeneous LND on $A = \KK[X]$ has the form $\lambda \partial_e$ for some $\lambda \in \KK, e \in \mathfrak{R}$.

\section{Varieties with finite numbers of $G$-orbit}\label{FNO}
Let $X$ be an irreducible affine variety. Suppose a connected linear algebraic group $G$ acts on $X$ with finite number of $G$-orbits. Then the image of $G$ in $\Aut(X)$ is contained in $\AAut(X)\subset\Aut(X)^0$. Therefore, there is only finite number $\AAut(X)$-orbits on $X$. It is easy to see that $\AAut(X)$ is a normal subgroup of $Aut(X)$. Therefore, each automorphism permutes $\AAut(X)$-orbits. Hence, the connected group $\Aut(X)^0$ preserves each $\AAut(X)$-orbit. That is $\Aut(X)^0$-orbits coincides with $\AAut(X)$-orbits. 

Consider a $G$-orbit $Z$. Denote 
$$\Omega = \{ O\text{ is a } G-\text{orbit} |  Z \subseteq O\ \text{and the closure}\  \overline{O}\ \text{is}\ \AAut (X)\text{-invariant}\}.$$ 

\begin{lemma}\label{antonlemma}

There is a $G$-orbit $\Phi(Z)\in\Omega$ such that 
$$\overline{\Phi(Z)} = \bigcap_{O \in \Omega} \overline{O}.$$ 

\end{lemma}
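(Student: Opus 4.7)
The plan is to take the intersection $I=\bigcap_{O\in\Omega}\overline{O}$ and show that it equals the closure of a single $G$-orbit belonging to $\Omega$, which we then take as $\Phi(Z)$. First, the set $\Omega$ is nonempty (the open $G$-orbit of $X$ has closure $X$, which contains $Z$ and is trivially $\AAut(X)$-invariant) and finite by the finite-orbit hypothesis, so $I$ is a well-defined closed subset containing $Z$; it is $G$-invariant and $\AAut(X)$-invariant as an intersection of sets with those properties.

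The central step is to decompose $I=I_1\cup\cdots\cup I_k$ into irreducible components and to prove that each $I_j$ is $\AAut(X)$-invariant. By finiteness of $G$-orbits, each $I_j$ is the closure of a single $G$-orbit $O_j$. For invariance, I would fix a generating connected algebraic subgroup $H\subseteq\AAut(X)$ (a $\GG_a$- or $\GG_m$-subgroup) and use that for each $h\in H$ the image $h\cdot I_j$ is an irreducible closed subvariety of $I$ of the same dimension as $I_j$, hence equals some component $I_{i(h)}$. The subsets $Z_i=\{h\in H:h\cdot I_j=I_i\}$ are disjoint, closed in $H$ (as intersections of preimages of the closed $I_i$ under the orbit maps $h\mapsto h\cdot x$ for $x\in I_j$), and cover $H$; connectedness of $H$ together with $e\in Z_j$ forces $H=Z_j$, so $h\cdot I_j=I_j$ for every $h\in H$. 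Since $\AAut(X)$ is generated by such subgroups, every $I_j$ is $\AAut(X)$-invariant.

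To conclude, $Z$ is irreducible as the image of the connected group $G$, so $Z\subseteq I_{j_0}$ for some index $j_0$. Then $O_{j_0}\in\Omega$ (its closure $I_{j_0}$ is $\AAut(X)$-invariant and contains $Z$), so by the very definition of $I$ we have $I\subseteq\overline{O_{j_0}}=I_{j_0}$; this forces $k=1$ and $I=I_{j_0}$, and we may take $\Phi(Z):=O_{j_0}$. The main obstacle is exactly the invariance of the components $I_j$ under $\AAut(X)$, since $\AAut(X)$ has no a priori algebraic structure; the decisive point is that its generating subgroups are connected and therefore cannot permute a finite collection of components non-trivially.
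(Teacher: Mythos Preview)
Your argument is correct and follows essentially the same route as the paper: form the intersection $I=\bigcap_{O\in\Omega}\overline{O}$, write it as a finite union of $G$-orbit closures (equivalently, irreducible components), observe that each component is $\AAut(X)$-invariant, pick the one containing the irreducible set $Z$, and conclude it lies in $\Omega$ so its closure must equal $I$. The paper's proof is terser at the key step---it simply asserts ``$\AAut(X)$ is a connected group, so each $\overline{O_i}$ is $\AAut(X)$-invariant''---whereas you correctly note that $\AAut(X)$ carries no a~priori algebraic structure and instead argue directly that each generating $\GG_a$- or $\GG_m$-subgroup, being connected, cannot permute the finite set of components nontrivially; this is exactly the content behind the paper's one-line claim.
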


\begin{proof}
Indeed, the set $ \bigcap\limits_{O \in \Omega} \overline{O}$ is closed $G$-invariant set so it is a finite union of closures of $G$-orbits. 

$$ \bigcap_{O \in \Omega} \overline{O}= \overline{O_{1}}\cup \ldots \cup \overline{O_{k}}.$$

In the same time $\AAut(X)$ is a connected group. So each $\overline{O_{i}}$ is $\AAut(X)$-invariant. But $Z$ is irreducible. So there is a number $j$ such that $Z \subseteq \overline{O_{j}}$. Then $O_j\in \Omega$ and $\Phi(Z)=O_j$.

\end{proof}

\begin{proposition}
Let $Y$ be $\AAut(X)$-orbit such that $Z\subseteq Y$. Then $Y$ contains~$\Phi(Z)$.
\end{proposition}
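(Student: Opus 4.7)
My plan is to show that $\overline{Y}=\overline{\Phi(Z)}$, and then exploit irreducibility of this common closure to identify $\Phi(Z)$ as the unique ``top'' $G$-orbit sitting inside $Y$.

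First, I would observe that $\overline{\Phi(Z)}=\bigcap_{O\in\Omega}\overline{O}$ is $\AAut(X)$-invariant as an intersection of $\AAut(X)$-invariant closed sets, and that it contains $Z$ because every $\overline{O}$ with $O\in\Omega$ does. Applying $\AAut(X)$-invariance yields $Y=\AAut(X)\cdot Z\subseteq\overline{\Phi(Z)}$, and therefore $\overline{Y}\subseteq\overline{\Phi(Z)}$.

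For the reverse inclusion I would re-run the argument of Lemma~\ref{antonlemma} on $\overline{Y}$. Since $Y$ is $\AAut(X)$-invariant and each $\phi\in\AAut(X)$ is a homeomorphism, $\overline{Y}$ is also $\AAut(X)$-invariant. The connected group $\AAut(X)$ permutes the finitely many irreducible components of $\overline{Y}$, hence must fix each of them. By irreducibility of $Z$, it lies inside a single component $C$; this $C$ is closed, irreducible, and $G$-invariant, so by the finiteness of $G$-orbits it equals the closure of its unique dense $G$-orbit $O^{*}$. Then $O^{*}\in\Omega$, so $\overline{O^{*}}\supseteq\overline{\Phi(Z)}$; combining with $\overline{O^{*}}\subseteq\overline{Y}$ yields $\overline{Y}\supseteq\overline{\Phi(Z)}$, and in particular $\overline{Y}=\overline{\Phi(Z)}$ is irreducible.

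Finally, I would decompose $Y$ as a finite union of $G$-orbits, $Y=O_{i_{1}}\cup\cdots\cup O_{i_{m}}$, so that $\overline{Y}=\bigcup_{k}\overline{O_{i_{k}}}$. Irreducibility of $\overline{Y}$ forces $\overline{O_{i_{k}}}=\overline{Y}=\overline{\Phi(Z)}$ for some $k$; and since any two $G$-orbits with a common closure must coincide (each being open and dense in that closure), we obtain $O_{i_{k}}=\Phi(Z)$, and hence $\Phi(Z)\subseteq Y$. The only delicate point is the invariance of the irreducible components of $\overline{Y}$ under $\AAut(X)$ in the previous paragraph; this rests on the fact, already used in Lemma~\ref{antonlemma}, that the connected group $\AAut(X)$ cannot permute a finite set of irreducible components nontrivially.
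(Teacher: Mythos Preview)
Your argument is correct and follows essentially the same route as the paper: both proofs hinge on the fact that $\overline{\Phi(Z)}$ is $\AAut(X)$-invariant and contains $Z$, and on decomposing $\overline{Y}$ into irreducible components which, by connectedness of $\AAut(X)$, are each $\AAut(X)$-invariant closures of $G$-orbits, one of which contains $Z$ and hence belongs to $\Omega$. The paper packages this as a proof by contradiction (if $\Phi(Z)\not\subseteq Y$ then that component would be strictly smaller than $\overline{\Phi(Z)}$, violating minimality), whereas you run it directly and obtain the slightly stronger intermediate statement $\overline{Y}=\overline{\Phi(Z)}$; your final step then locates $\Phi(Z)$ among the $G$-orbits in $Y$ via irreducibility, which the contradiction framing sidesteps.
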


\begin{proof}
Otherwise $\AAut(X)$-orbit containing $Z$ is contained in the set $\overline{\Phi(Z)}\setminus \Phi(Z)$. Then~$Y$ is a union of $G$-orbits $O_{1}\cup\ldots \cup O_{k}$ where $O_{j} \subseteq \overline{\Phi(Z)}$ for all~$j$. Since the group $\AAut (X)$ is connected, each $O_{j}$ is $\AAut(X)$-invariant. This implies that $\overline{O_{j}}$ is $\AAut(X)$-invarint for each $j$. But $Z$ is irreducible so there is a $j$ such that $Z \subseteq \overline{O_{j}}$. But this contradicts to the definition of $\Phi(Z)$. 
\end{proof}

Summerizing the preveous results we obtain the following corollary.
\begin{cor}
Let $Y$ be $\Aut(X)^0$-orbit containing $Z$. Denote 
$$S=\{O\subset \overline{\Phi(Z)}\mid Z\text{ is not contained in } \overline{\Phi(O)}\}.$$ 
Then $Y=\overline{{\Phi(Z)}}\setminus \bigcup\limits_{O\in S} \overline{O}$.
\end{cor}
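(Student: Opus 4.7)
The plan is to stratify $\overline{\Phi(Z)}$ by $G$-orbits and decide, using the map $\Phi$, which strata land in $Y$. By the opening paragraph of Section~\ref{FNO}, the $\Aut(X)^0$-orbit $Y$ coincides with the $\AAut(X)$-orbit of $Z$. By Lemma~\ref{antonlemma} together with the preceding Proposition, $\overline{\Phi(Z)}$ is the smallest $\AAut(X)$-invariant closed subset containing $Z$, so it equals the closure $\overline{Y}$; in particular $Y\subseteq\overline{\Phi(Z)}$. For a second $G$-orbit $O$, write $\Omega_O$ for the analogue of $\Omega$ with $Z$ replaced by $O$; the same lemma defines $\Phi(O)$ and $\overline{\Phi(O)}$ as the minimal $\AAut(X)$-invariant closed subset containing $O$.

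First I would prove the equivalence, for any $G$-orbit $O\subseteq\overline{\Phi(Z)}$,
\[
O\subseteq Y \iff \overline{\Phi(O)}=\overline{\Phi(Z)} \iff Z\subseteq\overline{\Phi(O)}.
\]
The implication $O\subseteq Y\Rightarrow\overline{\Phi(O)}=\overline{\Phi(Z)}$ is clear since then $O$ and $Z$ share the same $\AAut(X)$-orbit closure. Conversely, if $\Phi(O)=\Phi(Z)$ then by the Proposition both the $\AAut(X)$-orbit of $O$ and $Y$ contain $\Phi(Z)$, so they coincide. For the second equivalence: the inclusion $O\subseteq\overline{\Phi(Z)}$ makes $\overline{\Phi(Z)}$ an element of $\Omega_O$, giving automatically $\overline{\Phi(O)}\subseteq\overline{\Phi(Z)}$; and $Z\subseteq\overline{\Phi(O)}$ makes $\Phi(O)$ an element of $\Omega_Z$, giving the reverse inclusion.

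Consequently $S$ consists of exactly those $G$-orbits in $\overline{\Phi(Z)}$ that are not contained in $Y$, so set-theoretically $\bigcup_{O\in S}O=\overline{\Phi(Z)}\setminus Y$. It remains to check that passing to closures does not enlarge this union: if $O\in S$ and $O'$ is a $G$-orbit with $O'\subseteq\overline{O}$, then $O'\in S$. Indeed $\overline{O}\subseteq\overline{\Phi(O)}$ is closed and $\AAut(X)$-invariant and contains $O'$, hence $\overline{\Phi(O')}\subseteq\overline{\Phi(O)}$; since $Z\not\subseteq\overline{\Phi(O)}$, also $Z\not\subseteq\overline{\Phi(O')}$. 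Thus $\bigcup_{O\in S}\overline{O}$ is a union of $G$-orbits all belonging to $S$, so it equals $\bigcup_{O\in S}O=\overline{\Phi(Z)}\setminus Y$. Taking complements inside $\overline{\Phi(Z)}$ gives $Y=\overline{\Phi(Z)}\setminus\bigcup_{O\in S}\overline{O}$.

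The main obstacle is this last closure-stability point: without it one only obtains $Y$ as the complement of the union of the $G$-orbits in $S$, not of their closures. The resolution rests entirely on the monotonicity $O'\subseteq\overline{O}\Rightarrow\overline{\Phi(O')}\subseteq\overline{\Phi(O)}$, which falls out directly from the minimality characterization of $\overline{\Phi(\cdot)}$ established in Lemma~\ref{antonlemma}; everything else is bookkeeping.
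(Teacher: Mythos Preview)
Your proof is correct and follows exactly the line the paper intends: the paper offers no explicit argument for this corollary beyond ``Summarizing the previous results'', and your proof simply unpacks that summary using Lemma~\ref{antonlemma} and the preceding Proposition. Your equivalence $O\subseteq Y\iff\Phi(O)=\Phi(Z)\iff Z\subseteq\overline{\Phi(O)}$ is precisely what the paper records in the next corollary, and your closure-stability step (showing $O'\subseteq\overline{O}$ with $O\in S$ forces $O'\in S$) fills in a detail the paper leaves implicit. One tiny notational slip: when you write ``$\overline{\Phi(Z)}$ an element of $\Omega_O$'', you mean $\Phi(Z)\in\Omega_O$, since $\Omega_O$ is a set of $G$-orbits; the conclusion you draw is unaffected.
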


Note that $S=\{O\subset \overline{\Phi(Z)}\mid \Phi(O)\neq \Phi(Z)\}$. Therefore, we obtain the folowing assertion.
\begin{cor}
Let $O_1$ and $O_2$ be $G$-orbits. Then $\Aut^0 (X)$-orbits containing $O_{1}$ and $O_{2}$ coincide if and only if $\Phi(O_1) = \Phi(O_2)$.
\end{cor}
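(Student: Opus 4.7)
The plan is to deduce the corollary directly from the preceding corollary together with the reformulation $S = \{O \subset \overline{\Phi(Z)} \mid \Phi(O) \neq \Phi(Z)\}$ noted just above the statement. Under this description, the set $S$, and hence the explicit formula $Y = \overline{\Phi(Z)} \setminus \bigcup_{O \in S} \overline{O}$ for the $\Aut(X)^0$-orbit through $Z$, depends only on the orbit $\Phi(Z)$, not on the particular $Z$ used to reach it.

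For the ``if'' direction I would assume $\Phi(O_1) = \Phi(O_2)$ and apply the previous corollary with $Z = O_1$ and with $Z = O_2$. Both applications produce literally the same subset of $X$, so the $\Aut(X)^0$-orbits containing $O_1$ and $O_2$ coincide.

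For the converse I would suppose $O_1$ and $O_2$ lie in a common $\Aut(X)^0$-orbit $Y$. Taking $Z = O_1$ in the previous corollary gives $Y = \overline{\Phi(O_1)} \setminus \bigcup_{O \in S_1} \overline{O}$ with $S_1 = \{O \subset \overline{\Phi(O_1)} \mid \Phi(O) \neq \Phi(O_1)\}$. Since $O_2 \subseteq Y$, the orbit $O_2$ lies in $\overline{\Phi(O_1)}$ and avoids $\bigcup_{O \in S_1} \overline{O}$; in particular $O_2 \notin S_1$, which, by the $\Phi$-description of $S_1$, forces $\Phi(O_2) = \Phi(O_1)$.

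The only genuinely nontrivial point is the reformulation $S = \{O \subset \overline{\Phi(Z)} \mid \Phi(O) \neq \Phi(Z)\}$. The nonobvious direction is that if $Z \subseteq \overline{\Phi(O)}$ then $\Phi(O) = \Phi(Z)$; I would argue it by noting that $\overline{\Phi(O)}$ is $\AAut(X)$-stable and contains $Z$, so $\Phi(O)$ lies in the family $\Omega$ defining $\Phi(Z)$ in Lemma~\ref{antonlemma}, giving $\overline{\Phi(Z)} \subseteq \overline{\Phi(O)}$; a symmetric application, using $O \subset \overline{\Phi(Z)}$ and the family associated to $O$, yields the reverse inclusion and hence equality of the two closures, whence $\Phi(Z) = \Phi(O)$ by uniqueness of the open $G$-orbit inside a closed $G$-invariant set. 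With this reformulation in hand, no further obstacle remains: the corollary is essentially a restatement of the previous one in $\Phi$-invariant language.
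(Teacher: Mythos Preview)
Your proposal is correct and follows precisely the route the paper indicates: it states the reformulation $S=\{O\subset\overline{\Phi(Z)}\mid \Phi(O)\neq\Phi(Z)\}$ and derives the corollary from the preceding one, without giving further details. You have in fact supplied more than the paper does, namely a clean justification of the reformulation itself (using that $\overline{\Phi(\cdot)}$ is always $\AAut(X)$-invariant to pass between the $\Omega$-families of $Z$ and $O$), which the paper leaves to the reader.
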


We assume that we know adjunction of $G$-orbits.  Results of this section show that to obtain description of $\Aut^0(X)$-orbits it is sufficient to determine which $G$-orbits have $\AAut(X)$-invariant closures.

\section{Automorphisms of a variety with torus action}\label{TAC}

Let $X$ be an affine algebraic variety admitting an effective $T\cong(\KK^\times)^n$-action. The group of characters $M=\mathfrak{X}(T)$ is isomorphic to $\ZZ^n$.   The $T$-action corresponds to an $M$-grading on $A=\KK[X]$. Let $P$ be the weight monoid of this action i.e. $P=\left\{m\in M\mid A_m\neq\{0\}\right\}$. We use notations from Section~\ref{kd}.

\begin{definition}
Let $\tau$ be a face of $\sigma^\vee$.  
An element $v\in N$ is called {\it  $\tau$-bordering} if the following conditions are satisfied  
\begin{itemize}
\item $\langle \omega,v\rangle>0$ for all $\omega\in P\setminus \tau$.
\item if $\partial$ is a nonzero $M$-homogeneous LND of $A$ of degree $e$, then $\langle e,v\rangle\geq 0$.
\end{itemize}
\end{definition}
Let us consider the ideal $I_\tau=\bigoplus\limits_{\omega\in P\setminus\tau}A_\omega$ of $A$. Denote $Z_\tau=\VV(I_\tau).$
\begin{proposition}\label{be}
 If a face $\tau$ of $\sigma^\vee$ admits a $\tau$-bordering element $v\in N$, then the set $Z_\tau$ is $\AAut(X)$-invariant.
\end{proposition}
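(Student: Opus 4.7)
The plan is to show that $Z_\tau$ is preserved by every generator of $\AAut(X)$. Since $\AAut(X)$ is by definition generated by $\GG_a$- and $\GG_m$-subgroups, and a subgroup $\{\exp(t\partial)\}$ preserves $Z_\tau$ exactly when $\partial(I_\tau)\subseteq I_\tau$ (for an LND this follows by expanding the exponential termwise; for a semisimple derivation, a $\partial$-invariant subspace is automatically a direct sum of intersections with the $\partial$-eigenspaces, and hence is preserved by the corresponding $\GG_m$-action), it is enough to prove that every LND and every semisimple derivation $\partial$ of $A=\KK[X]$ sends $I_\tau$ into itself.

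Next, I would introduce the auxiliary $\ZZ$-grading $A=\bigoplus_d A^{[d]}$ with $A^{[d]}=\bigoplus_{\omega\in P,\,\langle \omega,v\rangle=d}A_\omega$. Condition~(a) in the definition of $\tau$-bordering, $\langle \omega,v\rangle>0$ for $\omega\in P\setminus\tau$, together with the face structure of $\tau$ (so that $\langle \omega,v\rangle$ vanishes precisely on $\tau\cap P$), identifies $I_\tau$ with the positively graded part $\bigoplus_{d\geq 1}A^{[d]}$. Thus preservation of $I_\tau$ by $\partial$ is equivalent to $\partial$ having no nonzero summand of negative $\deg_v$-degree.

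The main step is precisely to establish this last statement for any LND, and separately for any semisimple derivation $\partial$. Decompose $\partial=\sum_i\partial_i$ along the $\ZZ$-grading, and let $l$ be the minimal index with $\partial_l\neq 0$. If $l<0$, then by Lemma~\ref{l2} (part~(1) in the LND case; part~(2) in the locally bounded semisimple case, where $l\neq 0$ is used) the piece $\partial_l$ is itself a nonzero LND. Lemma~\ref{lleemm} then extracts a nonzero $M$-homogeneous LND summand $\delta$ of $\partial_l$ whose $M$-degree $e$ satisfies $\langle e,v\rangle=l$. Condition~(b) applied to $\delta$ forces $l\geq 0$, contradicting $l<0$.

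Therefore $\partial=\sum_{i\geq 0}\partial_i$, which preserves the subspace $\bigoplus_{d\geq 1}A^{[d]}=I_\tau$, yielding the required invariance. The main obstacle is this last extraction: Lemma~\ref{l2} by itself only produces $\ZZ$-homogeneous LNDs, while condition~(b) is phrased in terms of $M$-homogeneous LND degrees, and these two gradings are in general different. Lemma~\ref{lleemm} is what bridges them, and chaining the two lemmas is what makes condition~(b) applicable to a derivation realising the minimal $\deg_v$-degree.
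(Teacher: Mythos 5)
Your proposal is correct and follows essentially the same route as the paper: introduce the $\ZZ$-grading by $\deg_v$, reduce invariance of $Z_\tau$ to $\partial(I_\tau)\subseteq I_\tau$ for all locally nilpotent and semisimple derivations, and rule out a negative lowest-degree component by combining Lemma~\ref{l2} with Lemma~\ref{lleemm} to produce an $M$-homogeneous LND whose degree would violate condition (b) of the $\tau$-bordering definition. Your explicit remarks on why exponentials of invariant derivations preserve $Z_\tau$ and on the identification $I_\tau=\bigoplus_{d\geq 1}A^{[d]}$ only spell out steps the paper leaves implicit.
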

\begin{proof}
Let us consider the following $\ZZ$-grading on $A$:
$$
A=\bigoplus_{i\in\ZZ}A_{i},\qquad\text{where }A_{i}=\bigoplus_{\langle \omega, v\rangle=i}A_\omega.
$$

Let $\partial$ be a locally bounded derivation. By Lemma~\ref{l1} we have $\partial = \sum\limits_{i=l}^k \partial_i$, where $\partial_i$ is homogeneous under $\ZZ$-grading of degree $i$. If $l < 0$ by Lemma~\ref{l2} we obtain that $\partial_l$ is LND. Then we can decompose $\partial_l$ into the sum of $M$-homogeneous LNDs:
$$\partial_l = \sum\limits_{j}\partial_{lj},$$
where $\ZZ$-degree of $\partial_{lj}$ is equal to $l$. By Lemma~\ref{lleemm}  there exists an $M$-homogeneous LND $\partial_{lj}$. We obtain a contradiction. Therefore, $l  \geq 0$.

If $f\in I$, then it can be decomposed into the sum of $\ZZ$-homogeneous elements of positive degree $f=f_1+\ldots+f_r$. Therefore, 
$$\partial(f)=\sum_{i>0}\sum_{j\geq l\geq 0}\partial_j(f_i)\in  \bigoplus\limits_{p >0}A_p=I.$$
So, the ideal $I$ is $\partial$-invariant for any semisimple and for any locally nilpotent derivation. This implies that $Z_{\tau}$ is $\AAut(X)$-invariant.
\end{proof}

Let $\rho_1, \ldots, \rho_k$ be the rays of $\sigma$. And let $p_i$ be the primitive vectors on $\rho_i$.
Now we suppose that the $M$-grading has the following property:
\begin{equation}\label{oo}
A_\alpha\cdot A_\beta=A_{\alpha+\beta}\qquad\text{for all }\alpha,\beta\in P.
\end{equation}
The former equality means $A_{\alpha+\beta}=\langle fg\mid f\in A_\alpha, g\in A_\beta\rangle$.

\begin{lemma}\label{dr}
Let $e\in M$ is the degree of a $M$-homogeneous LND $\partial$ such that $e\notin \sigma^\vee$. Then $e$ is a Demazure root of $\sigma$. 
\end{lemma}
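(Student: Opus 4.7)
The goal is to establish the two defining properties of a Demazure root of $\sigma$: (a) there is a unique ray $\rho_0$ of $\sigma$ with $\langle e,p_{\rho_0}\rangle<0$, and (b) this value equals $-1$. Since $\sigma^\vee$ is dual to $\sigma$, the hypothesis $e\notin\sigma^\vee$ already produces at least one ray $\rho$ with $\langle e,p_\rho\rangle<0$.

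The common setup for both parts is the following. Since $\partial$ is $M$-homogeneous of degree $e$, the inclusion $\partial(A_\alpha)\subseteq A_{\alpha+e}$ forces $\partial(A_\alpha)=0$ whenever $\alpha+e\notin P$. Let $D(\partial):=\{\alpha\in P\colon \partial(A_\alpha)\neq 0\}$. Every $\alpha\in D(\partial)$ satisfies $\alpha+e\in P\subseteq\sigma^\vee$, and hence $\langle\alpha,p_\rho\rangle\geq-\langle e,p_\rho\rangle$ for every ray~$\rho$. Because $A$ is a domain and the degree function $\deg_\partial$ is multiplicative, for any $f\in A_\alpha$ with $\partial(f)\neq 0$ and any nonzero $g\in A_\beta$ the product $fg\in A_{\alpha+\beta}$ satisfies $\deg_\partial(fg)=\deg_\partial(f)+\deg_\partial(g)\geq 1$, so $\partial(fg)\neq 0$; thus $D(\partial)+P\subseteq D(\partial)$. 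Property~\eqref{oo} implies that $A$ is generated as a $\KK$-algebra by the pieces $A_{\alpha_i}$ for a set of monoid generators $\alpha_1,\ldots,\alpha_r$ of $P$, and the Leibniz rule then forces $\partial(A_{\alpha_i})\neq 0$ for some~$i$ (since $\partial\neq 0$).

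For (a), suppose for contradiction that two distinct rays $\rho_1\neq\rho_2$ both satisfy $\langle e,p_{\rho_i}\rangle<0$. Any monoid generator $\alpha_j$ of $P$ lying on the facet $\rho_i^\perp\cap\sigma^\vee$ then satisfies $\langle\alpha_j+e,p_{\rho_i}\rangle=\langle e,p_{\rho_i}\rangle<0$, so $\alpha_j+e\notin\sigma^\vee$ and $\partial(A_{\alpha_j})=0$. In the toric case, where each $A_m$ is one-dimensional, the action $\partial(\chi^m)=c_m\chi^{m+e}$ is governed by scalars $c_m\in\KK$ whose Leibniz-rule additivity gives $c_m=\langle m,v\rangle$ for some $v\in N\otimes\KK$; vanishing of $c_m$ on generators spanning both hyperplanes $\rho_1^\perp$ and $\rho_2^\perp$, whose sum is all of~$M_\QQ$, forces $v=0$, contradicting $\partial\neq 0$. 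Part~(b) then follows from the same linearisation: for the unique bad ray~$\rho_0$, $v$ is forced to be proportional to the primitive vector $p_{\rho_0}$, and analysing the chains $\alpha,\alpha+e,\ldots,\alpha+ke\in P$ (valid up to $k=\deg_\partial(f)$ with $f\in A_\alpha\cap D(\partial)$, and extended to arbitrary lengths by replacing $f$ with $f^N$) pins down $\langle e,p_{\rho_0}\rangle=-1$.

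The main obstacle is carrying out this linearisation beyond the toric case: under property~\eqref{oo} alone the pieces $A_m$ may be higher-dimensional, so $\partial$ is not encoded by a single additive function of $m$. A natural workaround is to extend $\partial$ uniquely to the integral closure $\tilde A$ of $A$ as an $M$-homogeneous LND of the same degree~$e$; the weight monoid of $\tilde A$ is the saturation $\bar P=\sigma^\vee\cap\ZZ P$, and the Demazure root condition depends only on the cone~$\sigma$, which is unchanged by saturation. Proving the lemma on $\tilde A$, where Demazure's classical argument (or its analogue for normal $T$-varieties) applies, then suffices, and the conclusion descends to~$A$.
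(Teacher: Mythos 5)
There is a genuine gap. Your argument is actually carried out only in the (normal) toric case; the whole difficulty of Lemma~\ref{dr} lies in the situation where the graded pieces $A_m$ have dimension greater than one, and your proposed workaround does not close it. Passing to the integral closure $\tilde A$ does not reduce to the toric case: $\tilde A$ is still an $M$-graded domain with higher-dimensional homogeneous components, so Demazure's linearisation $c_m=\langle m,v\rangle$ is meaningless there, and the ``analogue for normal $T$-varieties'' you invoke is not a quotable result of the required strength --- for normal $T$-varieties of positive complexity the degrees of homogeneous LNDs need \emph{not} be Demazure roots (homogeneous LNDs of horizontal type violate this), so the statement you would need for $\tilde A$ is essentially the lemma itself. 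A secondary issue: even in the toric case your additivity $c_{m+m'}=c_m+c_{m'}$ is only guaranteed where the relevant weights stay in $P$, so for nonnormal $P$ the linearisation already requires the normalization step you defer to.

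The paper closes exactly this gap using hypothesis~\eqref{oo} together with factorial closedness of $\ker\partial$, and never linearises. Assuming $e$ fails the Demazure condition (either $\langle e,p_i\rangle\le -2$ for some ray, or $e$ is negative on two distinct rays), one produces a weight $\widehat\omega$ \emph{interior} to $\sigma^\vee$ with $A_{\widehat\omega}\subseteq\ker\partial$ --- in the first case by following the chain $v,v+e,v+2e,\dots$ from a suitable point $v$ near a saturation point of $P$ (Lemma~\ref{mg} controls the holes of $P$), in the second by multiplying kernel elements from the two facets. Then for any $\beta\in P$ there is $\alpha$ with $A_\alpha A_\beta=A_{l\widehat\omega}=A_{\widehat\omega}^{\,l}\subseteq\ker\partial$ by~\eqref{oo}, and factorial closedness of the kernel of an LND forces $A_\beta\subseteq\ker\partial$, i.e.\ $\partial=0$, a contradiction. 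Your observation $D(\partial)+P\subseteq D(\partial)$ is precisely this factorial-closedness mechanism in contrapositive form, but you never use it to propagate vanishing from one interior weight to all of $P$; that propagation is the step that replaces the linearisation when $\dim A_m>1$, and it is what your proposal is missing.
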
 
\begin{proof}
If $\omega\in P$ is such an element that $\omega+e\notin P$. Then $\partial(A_\omega)=0$. 

Suppose $\langle e,p_i\rangle=-d\leq -2$.  Then there is $\widehat{\omega}\in P$ which is an inner point of $\sigma^\vee$ such that $A_{\widehat{\omega}}\in \mathrm{Ker}\,\partial$. Indeed, let us take a saturation point $u$ of $\sigma^\vee$. Let us take an element $v\in u+\sigma^\vee\cap M\subset P$ such that $v$ is an inner point of $\sigma^\vee$ and $\langle v, p_i\rangle$ is not divisible by $d$. Consider the sequence of points $v, v+e, v+2e,\ldots$ This  sequence leave $\sigma^\vee$. Hence there exists minimal $k$ such that $v+ke\notin P$. Then $v+(k-1)e$ is an inner point of $\sigma^\vee$ such that $A_{v+(k-1)e}\in\mathrm{Ker}\,\partial$.

Then there is $l\in \mathbb{N}$ such that $l\widehat{\omega}-u-\beta \in   \sigma^\vee$. 
Hence $l\widehat{\omega}-\beta \in u+ \sigma^\vee\cap P$.
Therefore, there is $\alpha\in P$ such that  $A_\beta A_\alpha=A_{\alpha+\beta}=A_{l\widehat{\omega}}=A^l_{\widehat{\omega}}\subset\mathrm{Ker}\,\partial$. By \cite{Fr} the kernel of any LND is factorially closed. Therefore $A_{\beta}\subset \mathrm{Ker}\,\partial$. So, $\partial=0$.

Suppose $\langle e,p_i\rangle\leq -1$, $\langle e,p_j\rangle\leq -1$. Then each $\alpha\in \langle p_i\rangle^\bot\cap P$ and each $\beta\in  \langle p_j\rangle^\bot\cap P$ are in the kernel of $\partial$. There exist such $\alpha$ and $\beta$ that $\widehat{\omega}=\alpha+\beta\in\mathrm{Ker}\,\partial$ is an inner point of~$\sigma^\vee$. Then we again obtain a contradiction.

Thus, there exists unique $i$ such that $\langle e,p_i\rangle\leq -1$ and for all other $j$ we have $\langle e,p_j\rangle\geq 0$.
\end{proof}

\begin{proposition}\label{prep}
Let $\tau$ be a face of $\sigma^\vee$. The subset $Z_\tau$ is not $\AAut(X)$-invariat if and only if there is a nonzero M-homogeneous LND with degree $e$, such that $e$ is a $\tau$-root. \end{proposition}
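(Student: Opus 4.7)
The plan is to prove the two implications separately.

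$(\Rightarrow)$ I argue by contrapositive: assuming no $M$-homogeneous LND of $A$ has degree a $\tau$-root, I construct a $\tau$-bordering vector $v \in N$, so that Proposition~\ref{be} yields $\AAut(X)$-invariance of $Z_\tau$. The candidate is $v := p_1 + \cdots + p_k$, the sum of the primitive generators of the rays of $\widehat{\tau}$. Since $v$ lies in the relative interior of $\widehat{\tau}$, the first $\tau$-bordering condition ($\langle \omega, v\rangle > 0$ on $P\setminus\tau$) is automatic. For the second condition, Lemma~\ref{dr} reduces the analysis of an arbitrary LND degree $e$ to three cases: $e \in \sigma^\vee$; $e$ is a Demazure root whose distinguished ray is not among $\rho_1, \ldots, \rho_k$; or $e$ is a Demazure root whose distinguished ray equals some $\rho_j$ with $1 \leq j \leq k$. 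The first two cases give $\langle e, v\rangle \geq 0$ automatically. In the third case $\langle e, p_j\rangle = -1$ and $\langle e, p_i\rangle \geq 0$ for $i \neq j$; since $e$ is not a $\tau$-root by hypothesis, at least one of the pairings $\langle e, p_{i_0}\rangle$ with $i_0 \in \{1,\ldots,k\}\setminus\{j\}$ must be $\geq 1$, so $\langle e, v\rangle = \sum_i \langle e, p_i\rangle \geq 0$.

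$(\Leftarrow)$ Let $\partial := \partial_e$ be a nonzero $M$-homogeneous LND with $e$ a $\tau$-root (distinguished ray $\rho_1$, say). My goal is to show $\partial(I_\tau) \not\subseteq I_\tau$, which implies that the $\GG_a$-subgroup $\{\exp(t\partial)\} \subseteq \AAut(X)$ does not preserve $Z_\tau$. The key observation is that $\partial$ annihilates every $A_\alpha$ with $\alpha \in \tau \cap P$: indeed $\langle \alpha + e, p_1\rangle = -1$ forces $\alpha + e \notin \sigma^\vee$, hence $A_{\alpha + e} = 0$. Consequently, if $\partial$ preserved $I_\tau$, the induced derivation on $A/I_\tau = \bigoplus_{\alpha \in \tau\cap P} A_\alpha$ would vanish on a generating set, hence would be identically zero, giving $\partial(A) \subseteq I_\tau$. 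The proof then reduces to finding $\omega \in P$ with $\omega + e \in \tau \cap P$ and $\partial(A_\omega) \neq 0$, since then $\partial(A_\omega) \subseteq A_{\omega+e}$ is nonzero and disjoint from $I_\tau$, contradicting the above.

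To construct $\omega$, I take a lattice point $\beta \in \tau \cap P$ lying deep in the relative interior of $\tau$ (for instance, a large multiple of the sum of the primitive generators of the rays of $\tau$) and set $\omega := \beta - e$. A direct pairing computation gives $\langle \omega, p_1\rangle = 1$, $\langle \omega, p_i\rangle = 0$ for $i = 2, \ldots, k$, and $\langle \omega, p_\rho\rangle \geq 0$ for the remaining rays $\rho$ of $\sigma$ provided $\beta$ is deep enough; using a saturation point of $\sigma^\vee$ supplied by Lemma~\ref{mg} then places $\omega$ inside $P$ itself. Nonvanishing of $\partial$ on $A_\omega$: in the toric case this is immediate from the explicit formula $\partial_e(\chi^\omega) = \langle p_1, \omega\rangle \chi^\beta = \chi^\beta \neq 0$; in the general $M$-graded setting it follows from the analogous structural description of root-type LNDs, combined with property~(\ref{oo}).

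The main obstacle is guaranteeing $\omega \in P$ (as opposed to merely $\omega \in \sigma^\vee$) when $X$ is nonnormal. This is automatic in the saturated case, but in general, especially when $\tau$ is nowhere saturated, the choices of $\beta \in \tau\cap P$ and of a saturation point $u$ of $\sigma^\vee$ must be coordinated so that $\beta - e$ simultaneously respects the pairings dictated by $\tau$-containment and lies in $u + \sigma^\vee \cap M \subseteq P$.
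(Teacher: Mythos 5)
Your forward implication is correct and is essentially the paper's argument read contrapositively: the candidate $v=p_1+\cdots+p_k$ is exactly the vector the paper uses, Lemma~\ref{dr} confines the possible degrees of nonzero $M$-homogeneous LNDs to $\sigma^\vee$ and Demazure roots, and Proposition~\ref{be} finishes. No complaints there.

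The reverse implication follows the same strategy as the paper (produce $\omega\in P\setminus\tau$ with $\omega+e\in\tau\cap P$ and $\partial(A_\omega)\neq 0$), but it has a genuine gap at the step ``$\partial(A_\omega)\neq 0$''. You justify this by the explicit toric formula $\partial_e(\chi^\omega)=\langle p_1,\omega\rangle\chi^{\omega+e}$ and then appeal to ``the analogous structural description of root-type LNDs'' in the general $M$-graded setting. No such description exists here: for a horospherical variety the graded pieces $A_\omega=S_\omega$ are not one-dimensional, a homogeneous LND is not determined by its degree, and nothing a priori prevents $\partial$ from vanishing on the particular component $A_\omega$ you constructed. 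The paper closes exactly this hole with a kernel argument instead of a formula: if $A_\omega\subseteq\mathrm{Ker}\,\partial$, then, since $\partial$ also annihilates every $A_u$ with $u\in\gamma=\langle p_1\rangle^\perp\cap\sigma^\vee$ (because $\langle u+e,p_1\rangle=-1$ forces $A_{u+e}=0$), property~(\ref{oo}) yields an interior weight $u+\omega$ of $\sigma^\vee$ with $A_{u+\omega}=A_uA_\omega\subseteq\mathrm{Ker}\,\partial$, and the factorial-closedness argument from the proof of Lemma~\ref{dr} then forces $\partial=0$, contradicting nontriviality. Some version of this argument is indispensable; the toric formula does not transfer. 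As for the membership $\omega=\beta-e\in P$ in the nonsaturated case: you are right to flag it, and you should not leave it as an acknowledged obstacle --- note that the same issue must be settled in the paper's version as well (it is needed both for $A_\omega\neq 0$ and for applying property~(\ref{oo})), so a complete proof has to coordinate the choice of $\beta$ with a saturation point rather than postpone it.
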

\begin{proof}
Suppose $Z_\tau$ is not $\AAut(X)$-invariant. Let us consider $p=\sum\limits_{p_k\bot \tau} p_k$. By Lemma~\ref{be}  $p$ is not a $\tau$-bordering element. Hence, there exists a nonzero $M$-homogeneous LND of degree~$e$ such that $\langle e,p\rangle<0$. By Lemma~\ref{dr} the element $e$ is a Demazure root. Therefore, there is $i$ such that $\langle e,p_i\rangle=-1$ and
$$
0>\langle e,p\rangle=\sum_{p_k\bot \tau} \langle e,p_k\rangle=-1+\sum_{j\neq i, p_j\bot \tau} \langle e,p_j\rangle,
$$
where $ \langle e,p_j\rangle\geq 0$ for all $j\neq i$. Hence, $\langle e,p_j\rangle=0$ for all $j\neq i$.

Now suppose there exists a nonzero M-homogeneous LND $\partial$ with degree $e$, where $e$ is a $\tau$-root. Then there is $i$ such that $p_i\bot \tau$ and $\langle e,p_i\rangle=-1$. And for all $i\neq j$ such that $p_j\bot \tau$ we have $\langle e,p_j\rangle=0$. Denote  $\gamma=\langle p_i\rangle^\bot\cap\sigma^\vee$. Then for every $u\in \gamma$ we have $\partial(A_u)=\{0\}$. Let $q_1,\ldots, q_s$ be primitive vectors on all rays of $\sigma$ which are not normal to $\tau$. Denote $c_r=\langle e, q_r\rangle$, $1\leq r\leq s$. There exists $\omega\in P\cap \tau$ such that $\langle \omega, q_r\rangle>c_r$ for all $r$. Therefore, $\omega-e\in P$. Note that $A_\omega\in I_\tau$. If there is $f\in A_\omega$ such that $\partial(f)\neq 0$, then $\partial(f)\notin I_\tau$. Hence, $I_\tau$ is not $\partial$-invariant. That is $Z_\tau$ is not $\AAut(X)$-invariant. Assume $A_\omega\in\mathrm{Ker}\,\partial$. Then there exists $u\in \gamma$ such that $u+\omega$ is an interior element of $\sigma^\vee$. As in Lemma~\ref{dr} we obtain $\partial=0$.
\end{proof}

\begin{remark}\label{vz}
In situation of the proof of  Proposition~\ref{prep}, denote $\xi_i=\sigma^\vee\cap\langle p_j|j\neq i\rangle^\bot$. In the proof of Proposition~\ref{prep} we see that if $Z_\tau$ is not $\AAut$-invariant, then for some $i$ there is $f\in A_\omega$, $\omega\in\xi_i$ such that $\partial(f)\notin I_\tau$. It is easy to see that we can assume $\omega$ to be an interior vector of $\xi_i$. Therefore, if $\partial(f)\in I_\zeta$ for some face $\zeta\preceq \sigma^\vee$, then $\xi_i$ is contained in $\zeta$.  Hence, for some $x\in \overline{O_{\tau}}$ and for some $t\in\KK$ the point $\exp(t\partial)(x)\in O_\zeta$ for some face $\zeta$ containing $\xi_i$.
\end{remark}

\section{Automorphism orbit on horospherical varieties}

Let $X$ be a horospherical complexity-zero variety. In Section~\ref{HV} we introduce a grading on $A=\KK[X]$ and describe $G$-orbits on $X$. So, we can apply to the variety $X$ results of Sections ~\ref{TAC} and~\ref{FNO}. Combining these results we obtain the folowing theorem.

\begin{theorem}\label{mmain}
Let $X$ be a horospherical complexity-zero variety. Each closure of $\Aut(X)^0$-orbit has the form
$\overline{O_\tau}$, where $\tau$ is a face of $\sigma^\vee$ such that there is no any nonzero $M$-homogeneous LND with degree equals a $\tau$-root.
\end{theorem}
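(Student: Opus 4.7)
The plan is to combine the general framework of Section~\ref{FNO} with the orbit-invariance criterion of Section~\ref{TAC}. First, since $X$ is a complexity-zero horospherical $G$-variety, Section~\ref{HV} gives a bijection between $G$-orbits on $X$ and faces of $\sigma$, so there are only finitely many $G$-orbits. The results of Section~\ref{FNO} therefore apply: $\Aut(X)^0$-orbits coincide with $\AAut(X)$-orbits, and the Corollary at the end of Section~\ref{FNO} shows that the closure of any $\Aut(X)^0$-orbit is an irreducible $\AAut(X)$-invariant set of the form $\overline{\Phi(Z)}=\overline{O_\tau}$, where $\tau$ is the face of $\sigma^\vee$ corresponding to the $G$-orbit $\Phi(Z)$. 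In particular, the possible faces $\tau$ are precisely those for which $\overline{O_\tau}$ is $\AAut(X)$-invariant.

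Next, I would check that the setting of Section~\ref{TAC} can be applied verbatim. The decomposition $A=\KK[X]=\bigoplus_{\Lambda\in P}S_\Lambda$ is the $M$-grading coming from the right action on $X$ of the maximal torus $T\subset B$, so that $M=\mathfrak{X}(T)$. The identity $S_\Lambda S_{\Lambda'}=S_{\Lambda+\Lambda'}$ recalled in Section~\ref{HV} is exactly the property (\ref{oo}) required in Section~\ref{TAC}. Moreover, the ideal $I_\tau=\bigoplus_{\Lambda\in P\setminus\tau}A_\Lambda$ of Section~\ref{TAC} coincides with the ideal $I(O_\tau)=\bigoplus_{\Lambda\in P\setminus\tau}S_\Lambda$ defining $\overline{O_\tau}$, so $Z_\tau=\overline{O_\tau}$ under both viewpoints.

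With these identifications in place, Proposition~\ref{prep} applies and yields the criterion: $\overline{O_\tau}$ is $\AAut(X)$-invariant if and only if there is no nonzero $M$-homogeneous LND on $A$ whose degree is a $\tau$-root. Combining this with the first step, I conclude that the closures of $\Aut(X)^0$-orbits on $X$ are exactly the subsets $\overline{O_\tau}$ for which no nonzero $M$-homogeneous LND has degree equal to a $\tau$-root.

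The main thing to verify is the compatibility between the $G$-equivariant picture of Section~\ref{HV} and the $T$-equivariant picture of Section~\ref{TAC}; once property (\ref{oo}) and the equality $Z_\tau=\overline{O_\tau}$ are established, the theorem is a clean assembly of Proposition~\ref{prep} with the Corollaries of Section~\ref{FNO}, and no further geometric argument is needed. A minor additional check is that the image of $G$ in $\Aut(X)$ lies in $\AAut(X)$, so that $G$-orbits genuinely refine the partition used in Section~\ref{FNO}; this holds because a connected reductive $G$ is generated by its $\GG_a$- and $\GG_m$-subgroups.
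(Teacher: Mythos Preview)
Your proposal is correct and follows exactly the route the paper intends: the paper's proof is literally the sentence ``Combining these results we obtain the following theorem,'' and you have accurately spelled out what that combination entails, namely that Section~\ref{FNO} reduces the problem to identifying which $\overline{O_\tau}$ are $\AAut(X)$-invariant, and Proposition~\ref{prep} (applicable because the $M$-grading $\KK[X]=\bigoplus_\Lambda S_\Lambda$ satisfies property~(\ref{oo}) and $Z_\tau=\overline{O_\tau}$) supplies the $\tau$-root criterion. The only remark is that the grading already determines a torus action on $X$, so you need not worry about realizing it via a ``right action of $T$'' on $X$; the $M$-grading with property~(\ref{oo}) is all that Section~\ref{TAC} actually uses.
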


The following corollary follows from this theorem and Remark \ref{vz}.
\begin{cor}
Let $H$ be the subgroup generated by $G$ and all exponents of $M$-homogeneous LNDs. Then $\Aut(X)^0$-orbits on $X$ coincides with $H$-orbits.
\end{cor}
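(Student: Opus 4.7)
Plan: I would prove the corollary by transferring the orbit framework of Section~\ref{FNO} to the smaller group $H$ and comparing the resulting invariants. First, $H\subseteq\Aut(X)^0$ by construction, so each $\Aut(X)^0$-orbit is a union of $H$-orbits; it remains to show that each such union consists of a single $H$-orbit.

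To apply the machinery of Section~\ref{FNO} to $H$ I would verify two conditions: (i) $H$ has finitely many orbits on $X$, which is immediate from $G\subseteq H$; and (ii) $H$ is ``connected'' in the sense needed by Lemma~\ref{antonlemma}, i.e.\ no element of $H$ nontrivially permutes the irreducible components of an $H$-invariant closed subset. Although $H$ need not be an algebraic subgroup of $\Aut(X)$, it is generated by the connected subgroups $G$ and $\{\exp(t\partial)\mid t\in\KK\}$, all passing through the identity; every element of $H$ is a finite product of factors drawn from connected one-parameter families based at $\mathrm{id}$, hence acts trivially on the finite set of irreducible components. This yields an analogue $\Phi_H$ of the map $\Phi$ of Section~\ref{FNO}, whose fibers are exactly the $H$-orbits.

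The next step is to prove $\Phi_H=\Phi$. Since $H\subseteq\AAut(X)$, every $\AAut(X)$-invariant closed subset is automatically $H$-invariant, giving $\overline{\Phi_H(Z)}\subseteq\overline{\Phi(Z)}$ for each $G$-orbit $Z$. For the reverse I argue the contrapositive: if $\overline{O_\tau}$ is not $\AAut(X)$-invariant, then Proposition~\ref{prep} supplies a nonzero $M$-homogeneous LND $\partial$ whose degree is a $\tau$-root, and the computation inside the proof of that proposition produces an element of $I_\tau$ whose image under $\partial$ lies outside $I_\tau$. Hence the one-parameter subgroup $\{\exp(t\partial)\mid t\in\KK\}\subseteq H$ does not preserve $\overline{O_\tau}$, so $\overline{O_\tau}$ is not $H$-invariant. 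This yields $\Phi_H=\Phi$, and the partition of $X$ into $H$-orbits coincides with the partition into $\Aut(X)^0$-orbits.

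The main obstacle I anticipate is condition (ii) above. Lemma~\ref{antonlemma} is proved using the fact that $\AAut(X)$ is connected; since $H$ is not manifestly an algebraic group, the adaptation to $H$ must rely instead on its description as a group generated by one-parameter connected subgroups through the identity. Once this adaptation is justified, the rest of the argument is a direct application of Proposition~\ref{prep} combined with the observation, already built into its proof, that the derivation realizing the failure of $\AAut(X)$-invariance is already $M$-homogeneous and therefore lies in $H$.
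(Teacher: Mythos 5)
Your proposal is correct and takes essentially the same route as the paper, which derives this corollary in one line from Theorem~\ref{mmain} and Remark~\ref{vz}: since $H\subseteq\AAut(X)\subseteq\Aut(X)^0$ gives one inclusion, and Proposition~\ref{prep} together with Remark~\ref{vz} shows that any failure of $\AAut(X)$-invariance of $\overline{O_\tau}$ is already witnessed by the exponential of an $M$-homogeneous LND, which lies in $H$, the two orbit partitions coincide. Your additional care in adapting the Section~\ref{FNO} machinery to the (not obviously algebraic) group $H$ via its generation by connected one-parameter families through the identity is a legitimate filling-in of a step the paper leaves implicit.
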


If we are given by a face $\tau$ of $\sigma^\vee$, it is an easy question if $\tau$ admits a $\tau$-root. But the problem is that for a given $\tau$-root $e$ not always there exists an $M$-hompgeneous LND with degree $e$. 

\begin{ex} 
Let $G=\mathrm{SL}_3$ and $P=\mathfrak{X}^+(B)$. Then the cone $\sigma^\vee$ in basis of fundamental weights is $\mathrm{cone}(e_1,e_2)$. Each face admits an $\tau$-root. For $\tau$ equals the origin, there are two $\tau$-roots: $(-1,0)$ and $(0,-1)$.  It is easy to compute that $X\cong\VV(x_1y_1+x_2y_2+x_3y_3)\subset \KK^6$. The orbit, corrasponding to $\tau$ is the point $q=(0,0,0,0,0,0)$. It is easy to see, that there are no LND with degrees $\tau$-roots. And the point $q$ is $\Aut(X)$-stable since it is the unique singular point. 
\end{ex}

\begin{ex}
We have the similar situation in case of HV-varieties. In this case the cone $\sigma^\vee$ is a ray. Therefore, for $\tau$ equals the origin there is a $\tau$-root. But for all HV-varieties exept affine space, $\tau$ correspondes to the unique singular point.
\end{ex}

The question for which $\tau$ there exists a $M$-homogeneous LND with the degree equals to a $\tau$-root Let us formulate a conjecture about answer to this question. 

\begin{conj}
Let $\tau$ be a face of $\sigma^\vee$. Let $p_1,\ldots, p_k$ be all primitive vectors on rays $\rho_1, \ldots, \rho_k$ of $\sigma$ that are normal to $\tau$. For $1\leq i\leq k$ we denote 
$$\xi_i=\sigma^\vee\cap\langle p_1,\ldots, p_{i-1},p_{i+1},\ldots, p_k\rangle^\bot.$$ 
The closure of $G$-orbit $O_\tau$ is not $\AAut(X)$-invariant if and only if the following conditions occur

\begin{enumerate}
\item  there exists $1\leq a\leq k$ and a $\tau$-root with distinguished ray $\rho_a$.
\item points of $O_\tau$ and $O_\xi$ have equal dimensions of tangent spaces $T_xX$.
\end{enumerate}
\end{conj}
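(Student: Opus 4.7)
\emph{Plan.} I would split the conjecture into necessity and sufficiency; the first is a refinement of material already in the paper, while the second requires genuinely new input.

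\emph{Necessity ($\Rightarrow$).} Suppose $\overline{O_\tau}$ is not $\AAut(X)$-invariant. Proposition~\ref{prep} yields a nonzero $M$-homogeneous LND $\partial$ of degree $e$ with $e$ a $\tau$-root; taking $\rho_a$ to be its distinguished ray establishes~(1). For~(2), I would strengthen Remark~\ref{vz} as follows. Pick $f\in A_\omega$ with $\omega$ an interior vector of $\xi_a$ and $\partial(f)\notin I_\tau$, and take a generic point $x\in O_\tau$ together with generic $t\in\KK$. Landing in a face $\zeta$ properly containing $\xi_a$ would force additional weight components of $\partial(f)$ to vanish at $\exp(t\partial)(x)$, which generic choice of $(x,t)$ avoids; hence $\exp(t\partial)(x)\in O_{\xi_a}$ itself. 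Since $\exp(t\partial)$ is a biregular automorphism of $X$ and $\dim T_{\bullet}X$ is constant on $G$-orbits, this forces $\dim T_xX=\dim T_yX$ for $x\in O_\tau$ and $y\in O_{\xi_a}$, which is~(2).

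\emph{Sufficiency ($\Leftarrow$).} By Proposition~\ref{prep} it suffices, assuming conditions (1) and (2), to produce a nonzero $M$-homogeneous LND whose degree is the $\tau$-root $e$ from~(1), with distinguished ray $\rho_a$. The plan is to mimic the toric formula $\partial_e(\chi^m)=\langle p_{\rho_a},m\rangle\chi^{e+m}$ in the horospherical setting via the decomposition $\KK[X]=\bigoplus_{\Lambda\in P}S_\Lambda$ and the embedding $X\cong\overline{Gv}\subset\bigoplus_j V(\Lambda_j)^*$ from Section~\ref{HV}: attach to $e$ a candidate $\GG_a$-flow using the unipotent direction realizing the Demazure root, summand by summand, and check that these assemble into a regular action on $X$. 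Condition~(2) should be the geometric input that makes the assembly succeed: if $\dim T_xX$ jumped between $O_\tau$ and $O_{\xi_a}$, the candidate vector field would have to acquire a pole on $\overline{O_\tau}$, obstructing extension; equality of tangent dimensions is exactly what allows the vector field to extend regularly.

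\emph{Main obstacle.} The difficulty is entirely in the sufficiency direction. Proposition~\ref{prep} converts non-$\AAut(X)$-invariance into existence of an LND of $\tau$-root degree, but gives no recipe for \emph{constructing} such an LND starting only from the combinatorial datum of a $\tau$-root; this is the open problem alluded to in the introduction, already nontrivial beyond the toric case. The technical heart is verifying local nilpotence of the assembled derivation, i.e.\ that the candidate one-parameter family genuinely integrates to a $\GG_a$-subgroup of $\Aut(X)$. The two worked examples ($G=\mathrm{SL}_3$ with $P=\mathfrak{X}^+(B)$, and HV-varieties) show that the $\tau$-root hypothesis alone is insufficient, confirming that condition~(2) must feed into the construction in an essential way; pinning down exactly how is the crux of the conjecture.
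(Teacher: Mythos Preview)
The paper proves only the necessity direction; sufficiency is left open (this is, after all, labeled a conjecture). So the comparison is on necessity alone.

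For necessity, your route to condition~(2) diverges from the paper's. The paper does \emph{not} attempt to show that the flow lands exactly in $O_{\xi_a}$. Instead it uses Remark~\ref{vz} to land in some $O_\zeta$ with $\zeta\supseteq\xi_a$, gets $\dim T_xX=\dim T_zX$ for $x\in O_\tau$, $z\in O_\zeta$ from the automorphism $\exp(t\partial)$, and then sandwiches via upper semicontinuity along the chain $O_\tau\subset\overline{O_{\xi_a}}\subset\overline{O_\zeta}$ to force $\dim T_xX=\dim T_yX$ for $y\in O_{\xi_a}$. This sidesteps the question of identifying $\zeta$ precisely.

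Your stronger claim that $\exp(t\partial)(x)\in O_{\xi_a}$ for generic $(x,t)$ is in fact true, but the justification you give is backwards: landing in $O_\zeta$ with $\zeta\supsetneq\xi_a$ means \emph{fewer} homogeneous functions vanish at the image point (since $I_\zeta\subsetneq I_{\xi_a}$), not more, so ``genericity avoids extra vanishing'' argues in the wrong direction. The correct argument is that the $\tau$-root condition $\langle e,p_j\rangle=0$ for all $j\neq a$ normal to $\tau$ forces $\partial(I_{\xi_a})\subseteq I_{\xi_a}$, hence $\overline{O_{\xi_a}}$ is $\exp(t\partial)$-stable; combined with Remark~\ref{vz} (the image lies in some $O_\zeta$ with $\zeta\supseteq\xi_a$), this pins down $\zeta=\xi_a$. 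With that fix your necessity argument is valid, though the paper's semicontinuity sandwich is shorter and needs no such auxiliary invariance.

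Your sufficiency discussion is appropriately flagged as a plan rather than a proof; since the paper does not attempt sufficiency either, there is nothing to compare. Your diagnosis of the obstacle---producing an $M$-homogeneous LND of prescribed $\tau$-root degree in the non-toric horospherical setting, with condition~(2) feeding into the construction---matches what the paper's examples (the $\mathrm{SL}_3$ quadric and HV-varieties) suggest.
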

\begin{proof}[Proof of necessity]
By Proposition~\ref{prep} if the set $Z_\tau=\overline{O_\tau}$ is not $\AAut(X)$-invariant, then there is a nontrivial $M$-homogeneous LND $\partial$ with the degree $e$, where $e$ is a $\tau$-root. 
Then the condition (1) occurs. Remark~\ref{vz} implies that for some $x\in \overline{O_{\tau}}$ and for some $t\in\KK$ the point $\exp(t\partial)(x)\in O_\zeta$ for some face $\zeta$ containing $\xi_a$. Since $O_\tau$ is open in $ \overline{O_{\tau}}$, we can assume $x\in O_\tau$. Therefore, points of $O_\tau$ and $O_\zeta$ have equal dimensions of tangent spaces. Since $O_\tau\subset \overline{O_{\xi_a}}\subset  \overline{O_{\zeta}}$, if $x\in O_{\tau}$, $y\in O_{\xi_a}$, $z\in O_\zeta$, then $\mathrm{dim}\,T_xX\geq\mathrm{dim}\,T_yX\geq\mathrm{dim}\,T_zX$. But $\mathrm{dim}\,T_xX=\mathrm{dim}\,T_zX$. Therefore, $\mathrm{dim}\,T_xX=\mathrm{dim}\,T_yX$, i.e. condition (2) holds.

\end{proof}

But for subclass of toric varieties the answer is known. If $X$ is a normal toric variety, then each Demazure root is the degree of a unique up to multiplicative constant nonzero LND. Therefore we obtain the following corollary of Theorem~\ref{mmain}.

\begin{cor}
Let $X$ be a normal toric variety. Each closure of $\Aut(X)^0$-orbit has the form
$\overline{O_\tau}$, where $\tau$ is a face of $\sigma^\vee$ such that there is no any $\tau$-roots.
\end{cor}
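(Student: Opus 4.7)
The plan is to deduce this corollary directly from Theorem~\ref{mmain} by invoking the known classification of $M$-homogeneous LNDs on a normal toric variety. By Theorem~\ref{mmain}, the closures of $\Aut(X)^0$-orbits are precisely the $\overline{O_\tau}$ for which no nonzero $M$-homogeneous LND of $A = \KK[X]$ has degree equal to a $\tau$-root. So the only thing to verify is that, in the normal toric setting, the condition "no nonzero $M$-homogeneous LND of degree a $\tau$-root exists" is equivalent to the combinatorial condition "no $\tau$-root of $\sigma$ exists."

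For the forward implication, if $\tau$ has no $\tau$-root, then a fortiori there is no $M$-homogeneous LND of degree a $\tau$-root. For the converse, I would recall the classification stated at the end of Section~2.4: on a normal affine toric variety, every $M$-homogeneous LND has the form $\lambda\partial_e$ for some Demazure root $e \in \mathfrak{R}$ and $\lambda \in \KK$, and the operator $\partial_e$ defined by $\partial_e(\chi^m) = \langle p_\rho,m\rangle\chi^{e+m}$ is a nonzero LND for every Demazure root $e$. Consequently, given any $\tau$-root $e$ (which by definition is a particular kind of Demazure root), the derivation $\partial_e$ is a nonzero $M$-homogeneous LND of degree $e$, so the existence of a $\tau$-root is equivalent to the existence of such an LND.

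Putting these two observations together, the faces $\tau$ of $\sigma^\vee$ for which $\overline{O_\tau}$ is an $\Aut(X)^0$-orbit closure are exactly those admitting no $\tau$-root, which is the statement of the corollary. I do not foresee any obstacle: the entire content is a translation between the LND-theoretic criterion of Theorem~\ref{mmain} and the purely combinatorial criterion about $\tau$-roots, made possible by the fact that in the normal toric case the map from Demazure roots to homogeneous LNDs is surjective (up to scalars) and exhausts all $M$-homogeneous LNDs.
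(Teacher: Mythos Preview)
Your proposal is correct and follows essentially the same approach as the paper: you invoke Theorem~\ref{mmain} and then use the classification of $M$-homogeneous LNDs on a normal affine toric variety (each Demazure root $e$ yields the nonzero LND $\partial_e$, and every $M$-homogeneous LND is a scalar multiple of some $\partial_e$) to reduce the LND-theoretic criterion to the purely combinatorial one. The paper states this deduction in a single sentence preceding the corollary, whereas you spell out both implications explicitly, but the content is identical.
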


This result follows from results of \cite{AB} and \cite{Sh2}.

Now let  $X$ be a nonnormal toric variety. Let $e$ be a Demazure root of $\sigma^\vee$. It is proved in \cite{BG}[Section~4] that there exists a nonzero $M$-homogeneous LND with degree $e$ if and only if $(e+P) \cap \sigma^\vee\subset P$. Let us call such Demazure roots {\it admissible}. Then we obtain the following corollary of Theorem~\ref{mmain}.

\begin{cor}\label{p}
Let $X$ be a (may be nonnormal) toric variety. Each closure of $\Aut(X)^0$-orbit has the form
$\overline{O_\tau}$, where $\tau$ is a face of $\sigma^\vee$ such that there is no any admissible $\tau$-roots.
\end{cor}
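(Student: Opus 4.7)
The plan is to deduce Corollary~\ref{p} as a direct combination of Theorem~\ref{mmain} with the classification of $M$-homogeneous LNDs on (possibly nonnormal) toric varieties from \cite{BG}. Theorem~\ref{mmain} already reduces the problem: an $\Aut(X)^0$-orbit closure is $\overline{O_\tau}$ precisely when there is no nonzero $M$-homogeneous LND whose degree is a $\tau$-root. So the task is to translate the condition ``there exists an $M$-homogeneous LND of degree $e$'' into combinatorial language in the nonnormal toric setting.

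First, I would recall the setting of Section~\ref{HV} specialized to $G=(\KK^\times)^n$: the coordinate algebra is $A=\bigoplus_{m\in P}\KK\chi^m$, and every $M$-homogeneous derivation $\partial$ has degree some element $e\in M$, acting on a homogeneous basis element by $\partial(\chi^m) = c_m\,\chi^{m+e}$ for scalars $c_m\in\KK$ (with $c_m=0$ whenever $m+e\notin P$). Next, I would invoke \cite{BG}[Section~4]: such an LND exists and is nonzero exactly when $e$ is a Demazure root of $\sigma$ satisfying the saturation-type condition $(e+P)\cap\sigma^\vee\subset P$, that is, when $e$ is admissible in the terminology of the corollary. (In the normal case, $P=\sigma^\vee\cap M$, so this condition is automatic and one recovers the classical Demazure description.)

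Combining the two ingredients is then formal. Fix a face $\tau\preceq\sigma^\vee$. By Theorem~\ref{mmain}, $\overline{O_\tau}$ is a closure of an $\Aut(X)^0$-orbit if and only if no nonzero $M$-homogeneous LND on $A$ has degree equal to a $\tau$-root. By the \cite{BG} criterion, the set of degrees of nonzero $M$-homogeneous LNDs on $A$ coincides with the set of admissible Demazure roots. Consequently, the vanishing condition in Theorem~\ref{mmain} is equivalent to the statement that no $\tau$-root is admissible, which is precisely the assertion of Corollary~\ref{p}.

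I do not anticipate a serious obstacle here, since the work has been done in the preceding theorem and in \cite{BG}; the one point that requires a brief check is that the notion of $\tau$-root used in Theorem~\ref{mmain} (a Demazure root with distinguished ray normal to $\tau$ and pairing zero with all other rays normal to $\tau$) is the same notion being intersected with the admissibility condition from \cite{BG}, so that ``admissible $\tau$-root'' is unambiguously the conjunction of the two properties. This is immediate from the two definitions, and the corollary follows.
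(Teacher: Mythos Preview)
Your proposal is correct and follows exactly the paper's approach: the corollary is stated immediately after recalling from \cite{BG} that the degrees of nonzero $M$-homogeneous LNDs on a (possibly nonnormal) toric variety are precisely the admissible Demazure roots, so the condition in Theorem~\ref{mmain} specializes verbatim to the absence of admissible $\tau$-roots. The only minor remark is that your check that every $\tau$-root is in particular a Demazure root is already built into the paper's definition of $\tau$-root, so no additional verification is needed.
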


\end{document}